\newtheorem{theorem}{Theorem}[section]
\newtheorem{definition}{Definition}[section]
\newenvironment{proof}{\smallskip\noindent{\bf Proof}}{$\Box$}
\numberwithin{equation}{section}
\def\bbR{{\mathbb R}}
\def\by{{\boldsymbol y}}
\def\bx{{\boldsymbol x}}
\def\bu{{\boldsymbol u}}
\def\cF{{\mathcal F}}
\def\cC{{\mathcal C}}
\def\cD{{\mathcal D}}
\def\cL{{\mathcal L}}
\def\cJ{{\mathcal J}}
\def\cG{{\mathcal G}}
\begin{document}

\title{Prediction and estimation of random variables\\ with infinite mean or variance}
\author{Victor de la Pe\~{n}a$^1$, Henryk Gzyl$^2,$\\
  Silvia Mayoral$^3$, Haolin Zou$^1$ and Demissie Alemayehu$^4$\\
$^1$Department of Statistics, Columbia University, New York\\
$^2$Center for Finance, IESA, Caracas\\
$^3$Dept. of Business Administration, Univ. Carlos III de Madrid.\\
$^4$Pfizer Inc, New York, NY 10017\\
vp@stat.columbia.edu, henryk.gzyl@iesa.edu.ve,\\
 smayoral@emp.uc3m.es, hz2574@columbia.edu,\\
 demissie.alemayehu@pfizer.com. \footnote{The authors report there are no competing interests to declare}} 

\date{}
 \maketitle

\baselineskip=1.5 \baselineskip \setlength{\textwidth}{6in}

\begin{abstract}
In this paper we propose an optimal predictor of a random variable that has either an infinite mean or an infinite variance. The method consists of transforming the random variable such that the transformed variable has a finite mean and finite variance. The proposed predictor is a generalized arithmetic mean which is similar to the notion of certainty price in utility theory. Typically, the transformation consists of a parametric family of bijections, in which case the parameter might be chosen to minimize the prediction error in the transformed coordinates. The statistical properties of the estimator of the proposed predictor are studied, and confidence intervals are provided. The performance of the procedure is illustrated using simulated and real data.
\end{abstract}

{\bf Keywords:} Random variables with infinite mean, random variables with infinite variance, prediction, non-parametric estimation.\\
{\bf MSC Classification:} 62F10, 62G05, 60G25.


\section{Introduction and preliminaries}
In this work, we consider the problem of predicting random variables that have an infinite mean or infinite variance. There are several situations where this arises in practice. One example is the prediction of level crossing times (or escape times from unbounded domains) of the $1-$dimensional Brownian motion that is ubiquitous in certain decision-making processes (see Patie (2004) and references therein). It is noted that continuous martingales may be viewed as time changes of $1-$dimensional Brownian motion. Another example is loss estimation when the loss density has heavy tails. This is a rather common problem in the insurance and reinsurance industries and includes the Pareto density with an infinite mean. Heavy tailed $t$ densities that have either infinite mean or infinite variance often come up in applications involving the analysis of social and economic data. Other examples include Cauchy random variables which have an infinite mean, or the random variable, defined on  $([0,1],dt)$ $X(t) = t^{-1/2},$ that has a finite mean but infinite variance with respect to the Lebesgue measure.

Our proposed “best predictor” of a random variable is inspired by Bernoulli’s idea regarding the St. Petersburg bet. Specifically, if $K$ denotes the first time a “tail” appears in the sequence of throws of a fair coin, the payoff of the bet is $X = 2^K.$ If the expected value of the payoff is taken as the price of the bet, then $E[X] = \infty$, and nobody would play this game. An approach to price this bet based on the concept of utility involves the use of a strictly increasing function $u$ defined in an interval containing the range of $X$ such that $u(X)$ has a finite expected value. For applications in economics, finance and risk analysis, $u$ is required to be concave. In our case,  $u(X)$ is assumed to be strictly monotone, having finite mean and variance. The central idea of our approach is that  $u^{-1}\big(E[u(X)]\big)$  can be interpreted as the “best predictor” of $X$ in a different metric. In economics and insurance, this idea is often referred to as  the “certainty equivalent” of $X.$ 

In this paper, a “best predictor” of an unobserved quantity is defined to be a value belonging to a certain class that is closer to the future value of the quantity than any other member of the class. In this formulation, there are at least four issues to consider. The first concerns the nature of the prior information about the unobserved quantity. The second involves the class within which the search is carried out, while the third relates to what is meant by “closer”. Lastly, one needs to address the practical issue of estimating the best predictor from observed data.

When a random variable $X$ is defined on some probability space with a known distribution, the best predictor of $X,$ in the absence of any prior information, is $E[X],$ and the prediction error is $E[(X-E[X])^2].$ Thus, in order to be able to predict and assess the quality of the prediction, the random variables must be at least square integrable.

In our study, we focus on the two related problems of extending the notion of best predictor to random variables with an infinite mean or an infinite variance, as well as the estimation of the best predictor from data. The underlying idea consists of a redefinition of the concept of distance, from a geometric point of view,  over the range of the variable of interest. More specifically, when the random variable has an infinite mean, we can distort its range bijectively in such a way that the transformed random variable has a finite mean and variance. We can use the distortion to induce a new distance in the range of the original random variable, and define a notion of best predictor relative to the new metric. This is where the main contribution of this work lies. 

It is noted that with the choice of coordinates, suitable distances should be defined so that the predictive methods behave accordingly. This issue is familiar to statisticians. For example, when one predicts using Euclidean distance $L_2$ or the $L_1$ distance, one obtains, respectively, the arithmetic mean or the median.

Bernoulli’s idea was extended to the notion of generalized arithmetic mean by Bonferroni (1926) and de Finetti (1931). More recently, it also surfaced in Berger and Casella (1992) and de Carvalho (2016). The use of changes of coordinates, also known as transformation theory in regression analysis and other applications is well documented in the literature (see, e.g., Bartlett (1947), Tukey (1957), Box and Cox (1964), Doksum (1984), Osborne (2002), Yang (2006) and Rojas-Perilla (2018)). Another line of work is the proposal of Cirillo and Taleb (2016), which concerns the tail of a distribution when the mean is infinite.

The rest of the paper is organized as follows.  In Section 2 we show that the certainty equivalent or generalized arithmetic mean is the best predictor in a metric defined by the change of variables. We consider the cases of unrestricted and restricted predictors. The latter comprises linear and nonlinear regression. Restricted prediction coupled with the change of variable methodology provides the nexus between our proposal and the transformation methodology. In Section 3, we examine the statistical properties of the predictor. Section 4 is devoted to illustrative examples, including analytical calculations and real and simulated data results. For selected random variables and transformations, it is shown that  the best estimators and the estimation error can be explicitly calculated. Simulation studies are performed to support the analytical calculations. Examples with real data illustrate the approach in the case of a Pareto density with an infinite mean and a Student$-t$ density with an infinite variance. For the case of finite mean but infinite variance, we consider a change of variables that can be interpreted as an inversion with respect to a  $1$-dimensional sphere, and for which the best predictor is the harmonic mean. Lastly, in Section 5 we briefly discuss the question of how to choose the change of variables and conclude with a few remarks.

\section{Predictors in non-Euclidean metrics}
In this section, we consider the problem of constructing predictors when the class of predictors is unrestricted as well as when the class is restricted. 

\subsection{Unrestricted prediction}

Consider a complete probability space $(\Omega,\cF, P)$ and a continuous, strictly monotone function  $u: \cJ\to\bbR,$ where $\cJ$ is some bounded or unbounded interval on the real line. 

Let  
\begin{equation}\label{m1}
\cL^p_u = \{X:\Omega \to \cJ| \big(E[(u(X))^p]\big)^{1/p}< \infty\}.
\end{equation}
Define the distance
\begin{equation}\label{m2}
d_u(X,Y) = \big(E[(u(X)-u(Y))^2]\big)^{1/2} \;\;\;\mbox{ for any}\;\;\;X,Y \in\cL_u^2.
\end{equation}

When $u = id : \cJ \to \cJ$ is the identity mapping, $d(X,Y)$ will denote the associated distance. For any stricly monotone function $u$ up to a null set, $d_u(X, Y)$ is a distance on $L^2_u$. 

We now state two results, the proofs of which are well established and may be found in any standard text book (see, e.g.,  Berger and Casella (1992) or Carvalho (2016)).

\begin{theorem}\label{m3}
Let $X$ be a $\cJ$ valued random variable and suppose that $X\in\cL_u^2.$ Then there exists a number $m_u\in\cJ$ such that $m_u=argmin\{E[\big(u(X)-u(x)\big)^2] : x\in\cJ\}.$  It is given by
\begin{equation}\label{m4}
m_u \equiv E_u[X] = u^{-1}\big(E[u(X)]\big).
\end{equation}
The square of the prediction error being:
\begin{equation}\label{m5}
E[\big(u(X) - u(m_u)\big)^2] = E[\big(u(X) - E[u(X)]\big)^2] =\sigma^2(u(X)).
\end{equation}
\end{theorem}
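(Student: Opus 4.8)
The plan is to reduce the assertion to the classical fact that, in $L^2$, the constant closest to a square--integrable random variable is its mean, and then transport that fact through the bijection $u$. First I would put $Z=u(X)$; since $X\in\cL_u^2$ the variable $Z$ is square integrable, so $E[Z]$ and $\sigma^2(Z)=E[(Z-E[Z])^2]$ are finite. Then I would record the elementary identity
\[
E[(Z-t)^2]=\sigma^2(Z)+(E[Z]-t)^2 ,\qquad t\in\bbR ,
\]
obtained by adding and subtracting $E[Z]$ and observing that the cross term vanishes; it shows that $t\mapsto E[(Z-t)^2]$ has the unique minimizer $t=E[Z]$ on all of $\bbR$, with minimal value $\sigma^2(Z)$.

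Next, because $u$ is continuous and strictly monotone it is a bijection from $\cJ$ onto an interval $I:=u(\cJ)$, so minimizing $E[(u(X)-u(x))^2]$ over $x\in\cJ$ is exactly minimizing $E[(Z-t)^2]$ over $t\in I$, with $x\in\cJ$ and $t=u(x)\in I$ in correspondence. Hence, once one knows that the unconstrained minimizer $E[Z]$ already lies in $I$, the constrained minimum over $I$ is again $\sigma^2(Z)$, attained at $t=E[Z]$, i.e.\ at $x=m_u:=u^{-1}(E[Z])=u^{-1}(E[u(X)])\in\cJ$; uniqueness is inherited from strict monotonicity of $u$, since $(E[Z]-u(x))^2=0$ forces $u(x)=E[Z]$ and hence $x=m_u$. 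This yields \eqref{m4}, and reading off the minimal value gives \eqref{m5}.

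The only real point to check is therefore the membership $E[Z]\in I$. Since $X$ is $\cJ$--valued, $Z$ takes its values in $I$, so the essential infimum $\alpha_0$ and essential supremum $\beta_0$ of $Z$ satisfy $\inf I\le\alpha_0$ and $\beta_0\le\sup I$. If $Z$ is almost surely constant, its mean equals that constant, which lies in $I$; otherwise $\alpha_0<E[Z]<\beta_0$, hence $\inf I<E[Z]<\sup I$, so $E[Z]$ belongs to the open interval with those endpoints and therefore to $I$. (Equivalently, the mean of a nondegenerate real random variable lies in the interior of the convex hull of its support.) I expect this membership to be the only genuine subtlety; everything else is the standard textbook computation, which is presumably why the statement is quoted here rather than proved in full detail.
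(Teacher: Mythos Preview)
Your argument is correct and matches the paper's approach: the paper does not supply a proof but defers to standard references, recording only the observation that $u(\cJ)$ is an interval so $E[u(X)]\in u(\cJ)$ and hence $m_u\in\cJ$. You carry out exactly this reduction to the classical $L^2$ minimization and are in fact more careful than the paper about the membership $E[u(X)]\in u(\cJ)$ when the interval is not closed.
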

Note that because $u(J )$ is an interval, $E[u(X)]\in u(\cJ)$ and then $m_u=E_u[X]\in\cJ..$  We will refer to it as the $u-$mean value of $X.$

When there is prior information contained in a sub-$\sigma-$algebra $\cG \subset \cF,$ in analogy to the Euclidean norm case, we have the following result.

\begin{theorem}\label{m6}
Let $u:\cJ\to\bbR$ be continuous and strictly monotone,  $Y\in\cL_u^2$ and $\cG$ be a sub-$\sigma$-algebra of $\cF.$
Then, there exists a square-integrable, $\cG-$measurable random variable, denoted by $E_u[Y|\cG]$ and computed by
\begin{equation}\label{m7}
E_u[Y|\cG] = u^{-1}\big(E[u(Y)|\cG]\big)
\end{equation} such that
$$E_u[Y|\cG] = \rm{arginf}\{E[\big(u(Y) -u(X)\big)^2]|Y\in\cL_u^2\;\;\mbox{and}\;\;X\in\cG\}.$$
\end{theorem}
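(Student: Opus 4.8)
The plan is to reduce the assertion to the classical $L^2$-projection characterization of the ordinary conditional expectation. Set $Z=u(Y)$, which lies in $L^2(\Omega,\cF,P)$ because $Y\in\cL_u^2$. Since $u$ is continuous and strictly monotone on the interval $\cJ$, it is a homeomorphism onto the interval $u(\cJ)$, so both $u$ and $u^{-1}$ are Borel measurable. Hence $X\mapsto u(X)$ is a bijection between the $\cG$-measurable random variables taking values in $\cJ$ and the $\cG$-measurable random variables taking values in $u(\cJ)$, with inverse $W\mapsto u^{-1}(W)$; moreover $E[(u(Y)-u(X))^2]=\|Z-W\|_{L^2}^2$ when $W=u(X)$ (the value being $+\infty$ precisely when $u(X)\notin L^2$, so such $X$ are never minimizers). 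Thus minimizing $E[(u(Y)-u(X))^2]$ over $\cG$-measurable $\cJ$-valued $X$ is the same as minimizing $\|Z-W\|_{L^2}^2$ over $\cG$-measurable $u(\cJ)$-valued $W\in L^2$.

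Next I would invoke the standard fact (the same one cited before Theorem \ref{m3}) that the unconstrained minimizer of $\|Z-W\|_{L^2}^2$ over all $\cG$-measurable $W\in L^2$ is the orthogonal projection of $Z$ onto $L^2(\Omega,\cG,P)$, namely $W^\star=E[Z\mid\cG]=E[u(Y)\mid\cG]$, and that this minimizer is unique up to a $P$-null set. It then remains to check that this unconstrained optimum is admissible, i.e.\ that $E[u(Y)\mid\cG]$ takes values in $u(\cJ)$ almost surely, so that $X^\star:=u^{-1}\big(E[u(Y)\mid\cG]\big)$ is well defined, $\cG$-measurable, and $\cJ$-valued. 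Since $u(\cJ)$ is an interval containing the range of $Z$, monotonicity of conditional expectation places $E[Z\mid\cG]$ in the closure of $u(\cJ)$; and if $u(\cJ)$ has an endpoint $c$ not belonging to it, then on $A:=\{E[Z\mid\cG]=c\}\in\cG$ we would have $E[(Z-c)\mathbf{1}_A]=0$ with $Z-c$ of constant strict sign, forcing $P(A)=0$. Therefore $E[u(Y)\mid\cG]\in u(\cJ)$ a.s.

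Combining the two steps, $X^\star=u^{-1}\big(E[u(Y)\mid\cG]\big)$ is the (a.s.\ unique) $\cG$-measurable $\cJ$-valued minimizer of $E[(u(Y)-u(X))^2]$, and $u(X^\star)=E[u(Y)\mid\cG]\in L^2$, which is exactly \eqref{m7}. I expect the only genuinely delicate point to be the admissibility check in the second step — verifying that the ordinary conditional expectation of $u(Y)$ cannot escape the interval $u(\cJ)$, in particular cannot sit on a missing endpoint — together with the routine but necessary bookkeeping that $u$ and $u^{-1}$ transport $\cG$-measurability in both directions; everything else is the textbook Hilbert-space projection argument transported through the change of variables.
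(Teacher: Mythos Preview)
Your argument is correct and is exactly the standard reduction to the $L^2$-projection characterization of ordinary conditional expectation; the paper itself does not supply a proof of this theorem, stating only that the result is well established and referring the reader to Berger and Casella (1992) and de Carvalho (2016). Your admissibility check that $E[u(Y)\mid\cG]$ stays inside $u(\cJ)$ is the conditional analogue of the paper's one-line observation after Theorem~\ref{m3} that $E[u(X)]\in u(\cJ)$ because $u(\cJ)$ is an interval, so you have in fact provided more detail than the paper does.
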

This result explains what we mean by prediction in the $d_u-$distance.

\subsection{Restricted prediction}
As in Section 2.1, consider the probability space $(\Omega,\cF,P)$ with $\cF$ containing all $P-$null sets, and $\cJ-$valued random variables. In many applications, it is of interest to consider a class $\cC$ of predictors smaller than $L_2(\sigma(X),P)$ as in the case of linear or non-linear regression. It is assumed that $\cC\subset L_2(\sigma(X),P)$ is closed in the $L_2(P)$ norm. As an example consider  $\cC=\{\sum_{k=1}^na_k\phi_k(X)|a_k\in\bbR\}$ where $\{\phi_k:k=1,...,n\}$ is some finite collection of measurable functions defined on $\cJ.$ A common exercise is, given a $Y\in L_2(\cF,P)$, to verify that there is a point, denoted here by $\Pi_{\cC}(Y)$  that solves the problem of finding:
$$\Pi_{\cC}(Y) = \rm{arginf}\{E[(Y-\phi(X))^2| \phi(X)\in\cC\}.$$

We shall call this point the optimal $\cC$-predictor of $Y.$

The objective then is to establish how the best predictor changes when we change the coordinates and the distance in the range of the random variables of interest. If the class of predictors satisfies an invariance  condition, we can relate the prediction in the $d_u$ distance to the prediction in the standard $L_2$ distance between random variables as follows.

\begin{theorem}\label{rest1}
With the notations introduced above, suppose that $\cC$ satisfies the following congruence condition with respect to the change of variables $u$:
\begin{equation}\label{consis}
u\circ\cC\circ u^{-1} = \cC\;\;\;\;\Leftrightarrow \;\;\; u^{-1}\circ\cC\circ u = \cC.
\end{equation}
Denote by $\Pi_{\cC,u}(Y)$ the best predictor of $Y$ by an element from $\cC$ in the $d_u$ distance. And denote by $\Pi_{\cC}(u(Y))$ the best predictor of $u(Y)$ by an element from $\cC$ in the standard distance. Then
\begin{equation}\label{rest2}
\Pi_{\cC,u}(Y) = u^{-1}\bigg(\Pi_{\cC}(u(Y))\bigg).
\end{equation}
\end{theorem}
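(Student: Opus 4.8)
The plan is to reduce the $d_u$-prediction problem to an ordinary $L_2$-prediction problem by a change of variables, exploiting the congruence condition~\eqref{consis}. First I would unwind the definition: by construction,
\[
\Pi_{\cC,u}(Y) = \mathrm{arginf}\big\{ E\big[(u(Y)-u(\phi(X)))^2\big] : \phi(X)\in\cC\big\},
\]
so the objective functional is exactly $E[(u(Y)-\psi(X))^2]$ where $\psi := u\circ\phi$. The key observation is that as $\phi(X)$ ranges over $\cC$, the congruence condition $u\circ\cC\circ u^{-1}=\cC$ guarantees that $\psi(X)=u(\phi(X))$ ranges over precisely $u\circ\cC\circ u^{-1}$ applied appropriately --- more carefully, I would argue that $\phi\in\cC \iff u\circ\phi\circ u^{-1}\in\cC$, and then substitute. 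This is the step I expect to require the most care: one must keep straight whether the elements of $\cC$ are viewed as functions on $\cJ$ or as random variables $\phi(X)$, and verify that the bijection $\phi\mapsto u\circ\phi\circ u^{-1}$ is a bijection of $\cC$ onto itself whose image, evaluated at $X$, gives all admissible predictors of $u(Y)$ in the standard $L_2$ distance.

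Second, once the substitution is justified, the infimum defining $\Pi_{\cC,u}(Y)$ becomes
\[
\inf\big\{ E\big[(u(Y)-\widetilde\psi(X))^2\big] : \widetilde\psi(X)\in\cC\big\},
\]
which is by definition the problem solved by $\Pi_{\cC}(u(Y))$, the optimal $\cC$-predictor of the random variable $u(Y)$ in the ordinary $L_2$ norm. Hence the minimizing $\widetilde\psi(X)$ equals $\Pi_{\cC}(u(Y))$. Unwinding the relation $\widetilde\psi = u\circ\phi\circ u^{-1}$ back to the original predictor $\phi(X)$ that attains the $d_u$-infimum gives $u(\phi(X)) = \Pi_{\cC}(u(Y))$ composed back through $u^{-1}$, i.e. $\phi(X) = u^{-1}\big(\Pi_{\cC}(u(Y))\big)$, which is the claimed identity~\eqref{rest2}.

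Two technical points would need a line each. One is existence and uniqueness of the minimizer: this follows from the assumption that $\cC$ is closed in the $L_2(P)$ norm together with the fact that $u(Y)\in L_2$ (since $Y\in\cL_u^2$), so the standard Hilbert-space projection theorem applies to the closed set $\cC\subset L_2(\sigma(X),P)$; I would also note that $\cC$ closed and the congruence condition together ensure the image set in the transformed problem is still closed, so $\Pi_{\cC}(u(Y))$ is well defined. The other is measurability and integrability: since $u$ is continuous and strictly monotone it is Borel, so $u\circ\phi\circ u^{-1}$ is again a measurable function on $\cJ$, and all the squared terms appearing are integrable because we restrict to $\cL_u^2$. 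With these in hand the argument is essentially a one-line change of variables; the only real obstacle is stating the congruence bookkeeping precisely enough that the substitution $\phi\leftrightarrow u\circ\phi\circ u^{-1}$ is manifestly a bijection of the admissible predictor sets.
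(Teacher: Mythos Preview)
Your proposal is correct and follows essentially the same route as the paper: unwind the definition of $\Pi_{\cC,u}(Y)$, insert $X=u^{-1}(u(X))$, and invoke the congruence condition \eqref{consis} to relabel $u\circ\phi\circ u^{-1}$ as a generic element of $\cC$, thereby reducing to the standard $L_2$ projection problem defining $\Pi_{\cC}(u(Y))$. The paper's proof is the same chain of equalities compressed into three lines, without the existence/uniqueness and measurability remarks you add; your extra care about whether $\cC$ is a class of functions or of random variables, and whether the transformed predictor should be written $\widetilde\psi(X)$ or $\widetilde\psi(u(X))$, is warranted (the paper is somewhat informal on exactly this point), but does not change the argument.
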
 

\begin{proof}
Notice that
$$\Pi_{\cC,u}(Y) = \rm{arginf}\{d^2_u(Y,\phi(X))|\phi\in\cC\} = \rm{arginf}\{d^2(u(Y),u(\phi(X)))|\phi\in\cC\} $$
$$= \rm{arginf}\{d^2(u(Y),u(\phi(u^{-1}\big(u(X)\big))))|\phi\in\cC\} = \rm{arginf}\{d^2(u(Y),\phi(u(X)))|\phi\in\cC\}$$
$$ = u^{-1}\Pi_{\cC}(u(Y)).$$
It is in the second to the last step where \eqref{consis} was used.
\end{proof}

As an illustration, consider $\cC=\{\sum_{k=1}^na_k\phi_k(X)|a_k\in\bbR\}$, where the collection $\{\phi_k:k=1,...,n\}$ and the set of real numbers $\{a_1,...,a_n\}$ are such that $\cC$ is a closed subset of $L_2(\Omega,\sigma(X),P).$ Then $u^{-1}(\left(\sum_{k=1}^n a^*\phi(u(X))\right)$ is the best predictor of $Y$ in the $d_u-$distance.

\section{Estimating the best predictor and statistical properties of the estimator}
\subsection{Estimation of the best predictor}
Suppose $\{X_1,...,X_n\}$ is a random sample from the distribution of $X.$ Then the quantity that is closest to all of them in the $u-$metric is:
\begin{equation}\label{m10}
\overline{X}_u(n) = u^{-1}\bigg(\frac{1}{n}\sum_{k=1}^nu(x_i)\bigg).
\end{equation}
            Clearly, the average is in $u(\cJ)$ and therefore $\overline{X}_u(n)\in\cJ.$  Applying the strong law of large numbers, we get
            $$\overline{X}_u(n)\, \underrightarrow{a.s.}\, E_u[X]=u^{-1}E[u(X)] \equiv m_u.$$

Note that  
$$E[u(X)]\in u(\cJ)\;\;\mbox{ therefore}\;\; m_u=u^{-1}\big(E[u(X)]\big)\in \cJ.$$
 
 Thus $ \overline{X}_u(n)$ can be thought of as an estimator of $X$ despite the fact that $E[X] = \infty.$ Further, the average, minimal square $u-$distance is
\begin{equation}\label{m11}
\frac{1}{n}\sum_{k=1}^nd_u(x_k,\overline{X}_u(n))^2 = \frac{1}{n}\sum_{k=1}^n\bigg(u(x_k)-\frac{1}{n}\sum_{k=1}^nu(x_i)\bigg)^2.
\end{equation}

 This idea is used to define the sample variance as
 
 \begin{equation}\label{m12}
\overline{\sigma}^{2}(u(X)) = \frac{1}{n}\sum_{k=1}^n d_u\big(X_k,E_u(X)\big)^2 = \frac{1}{n}\sum_{k=1}^n\big(u(X_k)-E[u(X)]\big)^2.
\end{equation}
    
 And, the unbiased estimator of the variance as:
 \begin{equation}\label{m13}
\hat{\sigma}^2(u(X)) = \frac{1}{n-1}\sum_{k=1}^n\bigg(u(X_k) - u(\overline{X}_u(n))\bigg)^2.
\end{equation}

\subsection{Properties of the estimator}
Under the above assumptions, let $\bx = {x_1,...,x_n}$ be a random sample from the distribution of $X.$ Then $u(x) = \{u(x_1),...,u(x_n)\}$ is a random sample corresponding to $u(X).$ By \eqref{m10}, $\overline{X}_u(n)$  is the best predictor of an element of the set $\bx$ in the $d_u-$distance. Theorem \ref{BE1} describes the basic properties of the sample estimator of $E_u[X].$

\begin{theorem}\label{BE1}
Let us now suppose that $u$ and $u^{-1}$ are both continuously differentiable. Then, the sample mean $\overline{X}_u(n)$ given by (\ref{m10}) is an $E_u$ unbiased and consistent estimator of $E_u(X),$ and $\hat{\sigma}^2(u(X))$ defined in \eqref{m13}, is an unbiased estimator of $Var(u(X))^2$.
\end{theorem}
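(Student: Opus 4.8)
The plan is to establish the three assertions separately, each reducing, once we pass to the transformed sample $\{u(X_1),\dots,u(X_n)\}$, to an elementary fact about i.i.d.\ averages. For the $E_u$-unbiasedness of $\overline{X}_u(n)$ defined in \eqref{m10}, I would begin from the identity $u(\overline{X}_u(n)) = \frac{1}{n}\sum_{i=1}^{n} u(X_i)$, which holds because $u$ and $u^{-1}$ are mutually inverse. Taking ordinary expectations and using linearity gives $E[u(\overline{X}_u(n))] = \frac{1}{n}\sum_{i=1}^{n} E[u(X_i)] = E[u(X)]$, the last equality because the sample is identically distributed; this common value is finite since $X\in\cL_u^2$ forces $u(X)$ to be square-integrable, and, as noted just after Theorem~\ref{m3}, it lies in the interval $u(\cJ)$, so $u^{-1}$ may be applied to it. Applying $u^{-1}$ then yields $E_u[\overline{X}_u(n)] := u^{-1}\big(E[u(\overline{X}_u(n))]\big) = u^{-1}\big(E[u(X)]\big) = E_u[X]$, which is precisely unbiasedness in the $d_u$-coordinates.

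For consistency I would invoke the strong law of large numbers applied to the i.i.d.\ sequence $u(X_1),u(X_2),\dots$, which is integrable since it is square-integrable; hence $\frac{1}{n}\sum_{i=1}^{n} u(X_i)\to E[u(X)]$ almost surely. Because $u^{-1}$ is continuous — indeed continuously differentiable by hypothesis — the continuous mapping theorem gives $\overline{X}_u(n) = u^{-1}\big(\frac{1}{n}\sum_{i=1}^{n} u(X_i)\big)\to u^{-1}\big(E[u(X)]\big) = E_u[X]$ almost surely, as already indicated in the discussion preceding the statement. I would remark that for this part mere continuity of $u^{-1}$ suffices; the full differentiability hypothesis is needed only for the asymptotic-distribution and confidence-interval results developed subsequently.

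For the variance estimator, I would again use $u(\overline{X}_u(n)) = \overline{u(X)}$, where $\overline{u(X)} := \frac{1}{n}\sum_{i=1}^{n} u(X_i)$, to recognize that $\hat{\sigma}^2(u(X))$ of \eqref{m13} is simply $\frac{1}{n-1}\sum_{k=1}^{n}\big(u(X_k) - \overline{u(X)}\big)^2$, i.e.\ the usual Bessel-corrected sample variance of the i.i.d.\ sample $u(X_1),\dots,u(X_n)$. Since $X\in\cL_u^2$, the quantity $Var(u(X))$ is finite, and the classical identity $E\big[\frac{1}{n-1}\sum_{k=1}^{n}(Z_k-\bar{Z})^2\big] = Var(Z_1)$ for i.i.d.\ square-integrable $Z_1,\dots,Z_n$ with $\bar{Z} = \frac{1}{n}\sum_{k=1}^{n} Z_k$ gives $E\big[\hat{\sigma}^2(u(X))\big] = Var(u(X))$.

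I do not anticipate a genuine obstacle: the argument is a transcription of standard i.i.d.\ facts to the transformed sample. The only points requiring a little care are bookkeeping ones — confirming that $E[u(X)]$ falls inside $u(\cJ)$ so that the generalized mean $E_u[\overline{X}_u(n)]$ is well defined, and keeping clear that ``unbiasedness'' here is meant in the $u$-coordinates, since $\overline{X}_u(n)$ is in general not an unbiased estimator of $X$ in the ordinary sense and $E[X]$ need not even exist.
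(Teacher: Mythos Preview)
Your argument is correct. The $E_u$-unbiasedness and the $\hat{\sigma}^2$-unbiasedness parts match the paper's approach essentially line for line (the paper actually leaves the latter implicit). The one genuine difference is in the consistency step: you appeal to the strong law of large numbers for $\frac{1}{n}\sum u(X_i)$ and then push through the continuous map $u^{-1}$, obtaining almost-sure convergence and using only continuity of $u^{-1}$. The paper instead works in the original coordinates, writing $\overline{X}_u(n)-E_u[X]=v\!\left(\tfrac1n\sum u(X_k)\right)-v\!\big(E[u(X)]\big)$ with $v=u^{-1}$, linearizes via a first-order Taylor expansion of $v$ about $E[u(X)]$ to get $E\big[(\overline{X}_u(n)-E_u[X])^2\big]\approx v'\big(E[u(X)]\big)^2\cdot\tfrac{1}{n}\,Var(u(X))$, and then invokes Chebyshev to conclude convergence in probability. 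Your route is cleaner, fully rigorous, and yields the stronger a.s.\ conclusion without the differentiability hypothesis; the paper's delta-method computation, while only heuristic as a proof of consistency, has the compensating virtue of exhibiting the $O(1/n)$ rate and the asymptotic variance $v'(E[u(X)])^2\,Var(u(X))$, which is exactly what feeds into the confidence-interval construction that follows.
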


\begin{proof} Let $v=u^{-1}.$ Note that 
$$
E_u\big[v\big(\frac{1}{n}\sum_{k=1}^nu(X_k)\big)\big] = v\bigg(\frac{1}{n}\sum_{k=1}^nEu(X_k)\bigg) = v\big(E[u(X)]\big)=E_u[X].
$$
That is $\overline{X}_u(n)$ is $E_u$ unbiased. It is easy to verify that 
$$d_u^2\big(\overline{X}_u(n),E_u[X]\big) = \frac{1}{n}Var(u(X)).$$
In order to verify the consistency of the estimator, we should consider the usual quadratic distance. 
$$\begin{aligned}
E\left[\bigg(\overline{X}_u(n)-E_u[X]\bigg)^2\right] = E\left[\bigg(v\big(\frac{1}{n}\sum_{k=1}^nu(X_k)\big)-v\big(E[u(X)]\big)\bigg)^2\right] \\
\approx v'\big(E[u(X)]\big)^2E\left[\bigg(\frac{1}{n}\sum_{k=1}^nu(X_k)-E[u(X)]\bigg)^2\right]=v'\big(E[u(X)]\big)^2\frac{1}{n}Var(u(X)).\end{aligned}
$$
The second identity follows from a first order expansion of $v$ about $E[u(X)].$ Using this and the Markov-Tchebyshev inequality we obtain that for any $a>0:$
$$P\left(\left|\overline{X}_u(n)-E_u[X]\right| > a\right] \rightarrow 0\;\;\;\mbox{as}\;\;\;n \to \infty.$$
That is $\overline{X}_u(n)$ is a consistent estimator of $E_u[X].$ 
\end{proof}

If the mean $E[X]$ is infinite, consider a modified coordinate system in which the mean is finite, compute the mean and bring it back to the original space by inverting the coordinate transformation.

\subsection{Confidence interval for the empirical estimator of the mean}

It is noted that while the empirical mean  $\frac{1}{n}\sum u(X_k)$ is an average of i.i.d. random variables, the same cannot be said about $\overline{X}_u(n)=u^{-1}\big(\frac{1}{n}\sum u(X_k)\big).$  However, as indicated earlier the convergence of $\frac{1}{n}\sum u(X_k)$ to $E[u(X)]$ can be transformed into a convergence of $\overline{X}_u(n)$ to $E_u[X].$ 

Similarly, to determine a confidence interval for the predictor, one can first do it in the transformed coordinates, then map the confidence interval back onto a confidence interval in the original coordinates. This is possible because the function is monotone. This is what we do in the numerical examples below.

So, suppose a $q_\alpha$ has been determined such that
\begin{equation}\label{CI1}
P\bigg(\left|\frac{\frac{1}{n}\sum_{k=1}^n u(X_k)- E[u(X)]}{\hat{\sigma}(u(X))}\right|> q_\alpha\bigg) \leq 1-\alpha.
\end{equation}
That such $q_\alpha$ exists is clear from the fact that the absolute value within the round brackets is positive with probability $1,$ and finite, that is, the probability of the event goes down to $0$ as $q_\alpha$ increases. To establish a confidence interval for $\frac{1}{n}\sum_{k=1}^n u(X_k)$ we must consider the fact that $u(\cJ)$ could be a bounded interval. Write $-\infty\leq A=\inf u(\cJ)<\sup u(\cJ)=B\leq\infty,$ and put
\begin{equation}\label{endpts}
l_\alpha(n) = \max\big(A,\overline{X}_u(n)-q_\alpha\hat{\sigma}(u(X))\big),\;\;\;\mbox{and}\;\;\;u_\alpha(n)=\min\big(B,\overline{X}_u(n)+q_\alpha\hat{\sigma}(u(X))\big).
\end{equation}

Then clearly (\ref{CI1}) can be expressed as
$$ P\bigg(E_u[X]\,\in \big(l_\alpha(n), u_\alpha(n)\big)\bigg) = 1-\alpha.$$
We summarize these comments in the following statement. We suppose that $u$ is strictly decreasing but the argument applies when it is increasing.

\begin{theorem}\label{CI3}
Let $\{X_1,....,X_n\}$ be a sample of a positive random variable that has an infinite mean. Let $u:\cJ=[0,\infty)\to u(\cJ)\subset [0,\infty)$ be a continuous monotone decreasing function such that $E[u^2(X)]<\infty.$ Suppose that a number $q_\alpha$ has been determined such that in the transformed variables (\ref{CI1}) holds true. With the notations introduced above, the interval $\big(l_\alpha(n), u_\alpha(n)\big)$ is transformed back to the original coordinates as:
$$L_\alpha(n)=v\big(u_\alpha\big),\;\;\mbox{and}\;\;U_\alpha(n)=v\big(\overline{X}_u(n)-q_\alpha\hat{\sigma}(u(X))\big),$$
and (\ref{CI1}) becomes
\begin{equation}\label{CI2}
P\bigg(E_u[X]) \in [L(n),U(n)]\bigg) \leq 1-\alpha.
\end{equation}
\end{theorem}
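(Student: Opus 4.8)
The plan is to exploit the strict monotonicity of $u$ (here, strictly decreasing) to transport the one-dimensional event inside \eqref{CI1} through the bijection $v=u^{-1}$, and then to rewrite the resulting interval endpoints in terms of the original coordinates. First I would start from the event
$$
\left\{\left|\tfrac{1}{n}\sum_{k=1}^n u(X_k)-E[u(X)]\right|\le q_\alpha\,\hat\sigma(u(X))\right\},
$$
whose probability is $\ge\alpha$ by the definition of $q_\alpha$ in \eqref{CI1}. I would observe that this event equals
$$
\left\{E[u(X)]\in\big[\overline X_u(n)-q_\alpha\hat\sigma(u(X)),\ \overline X_u(n)+q_\alpha\hat\sigma(u(X))\big]\right\},
$$
after writing $\tfrac1n\sum u(X_k)=u(\overline X_u(n))$ by \eqref{m10}. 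Intersecting with the deterministic fact that $E[u(X)]\in u(\cJ)\subseteq[A,B]$ (noted right after Theorem~\ref{m3}), this event is unchanged if we clip the endpoints to $[A,B]$, giving exactly $\{E[u(X)]\in[l_\alpha(n),u_\alpha(n)]\}$ with $l_\alpha,u_\alpha$ as in \eqref{endpts}.

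Next I would apply $v=u^{-1}$. Since $u$ is continuous and strictly decreasing on $\cJ$, so is $v$ on $u(\cJ)$, hence $v$ is a decreasing bijection and $a\le t\le b \iff v(b)\le v(t)\le v(a)$ for $a,b,t\in u(\cJ)$. Applying this with $t=E[u(X)]$, $a=l_\alpha(n)$, $b=u_\alpha(n)$ converts the event into
$$
\left\{v\big(u_\alpha(n)\big)\le v\big(E[u(X)]\big)\le v\big(l_\alpha(n)\big)\right\}
=\left\{E_u[X]\in\big[v(u_\alpha(n)),\,v(l_\alpha(n))\big]\right\},
$$
using $v(E[u(X)])=E_u[X]$ from \eqref{m4}. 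Setting $L_\alpha(n)=v(u_\alpha(n))$ and $U_\alpha(n)=v(l_\alpha(n))$ — which, after unwinding the $\min$/$\max$ clipping and noting that $v$ maps the clipped endpoint $B$ to the left boundary of $\cJ$ and $A$ to the right boundary, matches the formulas stated (the $\overline X_u(n)-q_\alpha\hat\sigma$ expression for $U_\alpha$ being the un-clipped form of $v(l_\alpha(n))$) — we get $P\big(E_u[X]\in[L_\alpha(n),U_\alpha(n)]\big)\ge\alpha$, i.e. \eqref{CI2} in the complementary $\le 1-\alpha$ phrasing for the excluded event. The increasing case is identical with the roles of the two endpoints swapped.

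The step I expect to require the most care is bookkeeping of the boundary clipping: verifying that applying the decreasing map $v$ to $l_\alpha(n)=\max(A,\overline X_u(n)-q_\alpha\hat\sigma)$ and $u_\alpha(n)=\min(B,\overline X_u(n)+q_\alpha\hat\sigma)$ produces precisely $U_\alpha(n)$ and $L_\alpha(n)$ as written, since $v\circ\max$ becomes $\min\circ v$ and vice versa, and one must check the degenerate cases $A=-\infty$ or $B=+\infty$ (where the clip is vacuous) as well as the case where $v(A)$ or $v(B)$ is the finite endpoint $0$ or $\infty$ of $\cJ=[0,\infty)$. None of this is deep, but it is where a sign or an inclusion could easily be mismatched; everything else is a direct consequence of monotonicity of $u$, of \eqref{m10} and \eqref{m4}, and of the defining inequality \eqref{CI1}.
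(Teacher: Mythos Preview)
Your proposal is correct and follows exactly the approach the paper takes: the paper gives no separate proof of this theorem, presenting it instead as a summary of the preceding discussion in Section~3.3, which is precisely the argument you spell out (rewrite the event in \eqref{CI1} as an interval for $E[u(X)]$, clip to $[A,B]$ via \eqref{endpts}, then push through the decreasing bijection $v=u^{-1}$ so that endpoints swap). Your added care about the boundary clipping and the $\min/\max$ interchange under $v$ goes slightly beyond what the paper writes out, but the underlying idea is identical.
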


\section{Analytical and numerical examples}
In this section, we illustrate computational aspects of the proposed predictor and its estimator using analytical examples as well as simulated and real data. In the examples that involve crossing times, we consider two parametric transformations and show how to choose the parameter in such a way that the variance of the transformed variable is minimal.

\subsection{The median as an $L_2-$predictor in a natural change of variables}
Consider a continuous random variable $X$ with an infinite mean, and let $F$ denote its cumulative distribution function. Consider a continuous random variable $X$ and let $F$ denote its cumulative distribution function.  Let $u(x)=F(x),$ then:
$$E[u(X)]=\int F(x)dF(x) = \frac{1}{2}.\;\;\;\;\Longrightarrow\;\;\;\;E_u[X] = F^{-1}(\frac{1}{2}).$$

\subsection{Barrier crossing times}\label{ex2}
For a Brownian motion process in which a decision is made once a barrier is reached, the expected time to reach the barrier is infinite. Let $X(t)$ denote the standard Brownian motion.  If $L>0,$ and $\tau_L=\inf\{t>0: X(t)\geq L\}$ denotes the time it takes to reach level $L,$ then it is known that:

\begin{equation}\label{bm2}
f_{\tau_L}(t) = \frac{L}{(2\pi t^3)^{1/2}}e^{-L^2/2t}, \;\;\;t>0.
\end{equation}

Next, let $u(t)=\exp(-b/(2t))$, so that $u^{-1}(s) = b/(2\ln s^{-1}).$ Then, using (\ref{bm2}) 
 $E[\exp(-b/(2\tau_L))]=L/(\sqrt{b+L^2}).$ Therefore the best $u-$predictor is:
 
$$E_u[\tau_L]= \frac{b}{2\big(\ln\sqrt{(b + L^2)} -\ln L\big)} = \frac{b}{2\big(\ln\sqrt{(\frac{b}{L^2}+1)}\big)}=\frac{b}{\ln(\frac{b}{L^2}+1)}.$$

As $b\to 0,$ $E_u[\tau_L]\to L^2.$ The prediction error is:
$$Var(u(\tau_L)) = \frac{L}{\sqrt{2b + L^2} }- \bigg(\frac{L}{\sqrt{b + L^2}}\bigg)^2$$

Let $\xi=b/L^2.$ The minimum of $Var(u(\tau_L))$ is reached at the solution to $(\xi+1)^4-(2\xi+1)^3=0.$ It is given by $\xi^*\approx 5.2223$ at which the prediction error is $\sqrt{Var}_{max}\approx 0.135.$

To consider a different change of variable,  let $u(t)=\exp(-ct).$ The inverse of this strictly monotone function is $u^{-1}(s)=\frac{1}{c}\ln(\frac{1}{s}).$ It is also known that $E[e^{-c\tau_L}]=e^{-\sqrt{2c}L}.$ This implies that the best $u-$predictor in this case is:
$$E_u[\tau_L] = \sqrt{\frac{2}{c}}L.$$
The prediction error is given by 
$$Var(u(\tau_L))=e^{-2\sqrt{c}L} - e^{-2\sqrt{2c}L}.$$

This error is minimal at $\sqrt{c}=\ln(2)(\sqrt{2}+1)/4L,$ and equals $Var_{min}\approx0.1269.$ It is noted that while the optimal value of $c$ depends on $L$, the minimal value does not.

\subsection{A Pareto variable with infinite mean}
This case might be of potential application in risk management because the coomon Pareto distribution has heavy tails.The specific density that we consider is 
$$f_X(x)=\alpha(1+x)^{-(1+\alpha)},\;\;\; \mbox{with}\;\;\; 0<\alpha<1.$$

In the following illustrations, we first analytically determine the prediction error for the transformed variables. Then we carry out a simulation study of the predictor, the prediction error, and the confidence interval. Lastly, we will give an example using real data from a Pareto density with an infinite mean.

Our aim is to estimate the mean and the prediction error. Consider the family of bijections $u_b(x)=(1+x)^{-b}$ with $b\geq 1$ that map $[0,\infty)$ onto $(0,1].$  Since $E[u_b(X)] = \frac{\alpha}{b+\alpha},$ the best $u_b-$predictor  and its variance are:

\begin{equation}\label{tmv1}
E_{u_b} = \bigg(\frac{b+\alpha}{\alpha}\bigg)^{1/b}-1,\;\;\;\mbox{and}\;\;\;Var(u(X)) = \frac{\alpha b^2}{(2b+\alpha)(b+\alpha)^2}.
\end{equation}

Both quantities are finite for each $b\geq 1,$ and the estimation error decreases as $b \to \infty.$  
Next, we generate a sample $\{s_k:k=1,...,n\}$ from $U[0,1]$ from which we obtain a sample $\{x_k:k=1,...,n\}$ of a Pareto($\alpha$)  by setting $x_k=(1-s_k)^{1/\alpha}-1$ (with $\alpha=1/2$). Then compute $u_b(x_k)$ to which we apply the routine described above. That is, we compute the empirical variance $\hat{\sigma}^2(u_b(X))$ for a sample of size $n=100,500, 1000, 5000.$ The result is displayed in Figure \ref{fig3}, in which we plot the variance as function of $b.$.

\begin{figure}[h]
\centering
\subfloat[$\sigma(u_b(X))$ for each sample size]{\includegraphics[width=3.0in,height=3.0in]{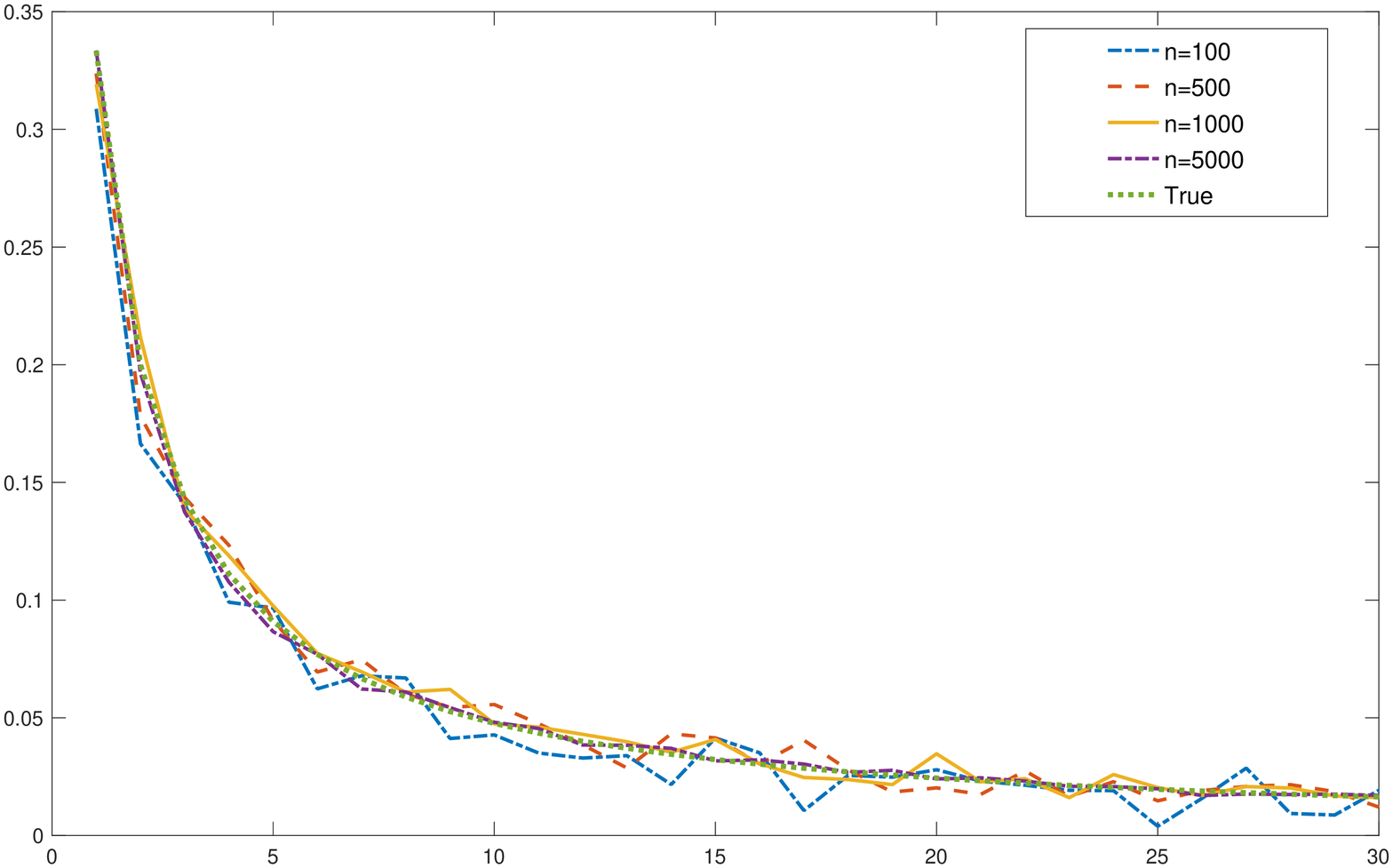}\label{fig3}}
\subfloat[Averaged $Var(u_b(X))$]{\includegraphics[width=3.0in,height=3.0in]{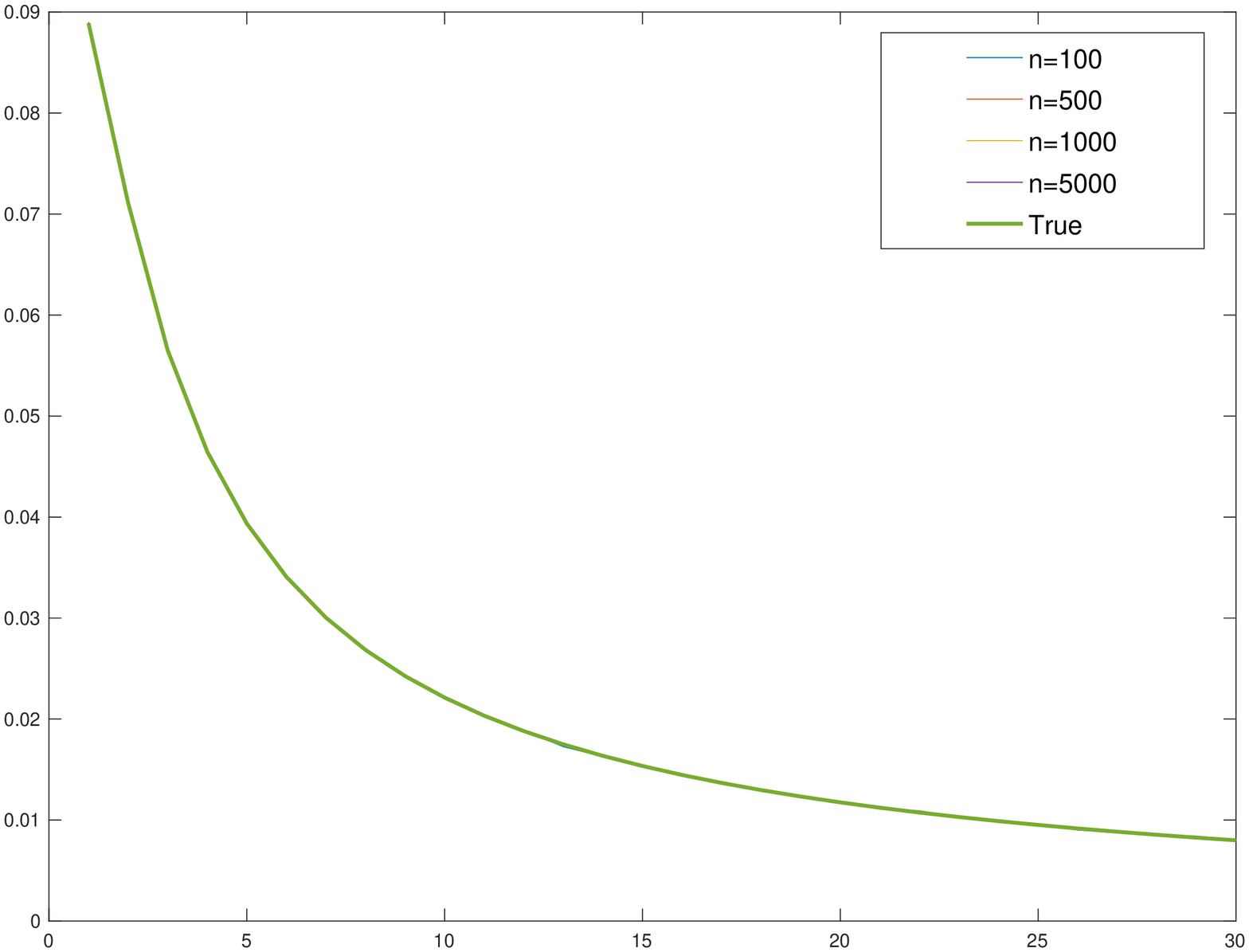}\label{fig4}}
\caption{$Var(u_b(X))$ as function of the parameter $b$}
\end{figure}

In addition, we simulated $5000$ samples of the different sizes and computed the average of the empirical variances of the different sample sizes. The result is displayed in Figure \ref{fig4}. As expected, the variance decreasses with $b.$

Given a sample $\{x_1,...,x_n\}$ of this distribution, the sample mean in the $u_b-$distance is:
$$\overline{X}_{u_b}(n) = \left(\frac{1}{n}\sum_{k=1}^N\frac{1}{(1+x_k)^b}\right)^{-1/b} - 1.$$

In Fig \ref{fig5}, we display the result of the computation for the samples of sizes $100, 500, 1000, 5000,$ and present the empirical mean as a function of $b.$ To smooth the result, we simulated $5000$ samples and averaged the mean for each $b.$ The result is displayed in Figure \ref{fig6}. Our predictor matches the theoretical value, and $\overline{X}_{u_b}(n)\to 0$ as $b\to\infty.$ 

\begin{figure}[h]
\centering
\subfloat[One sample of each size]{\includegraphics[width=3.0in,height=3.0in]{Figures/FUNCION_MEDIA_caso_varianza__media_inf_1trayectoria.eps}\label{fig5}} 
\subfloat[Averaged over samples]{\includegraphics[width=3.0in,height=3.0in]{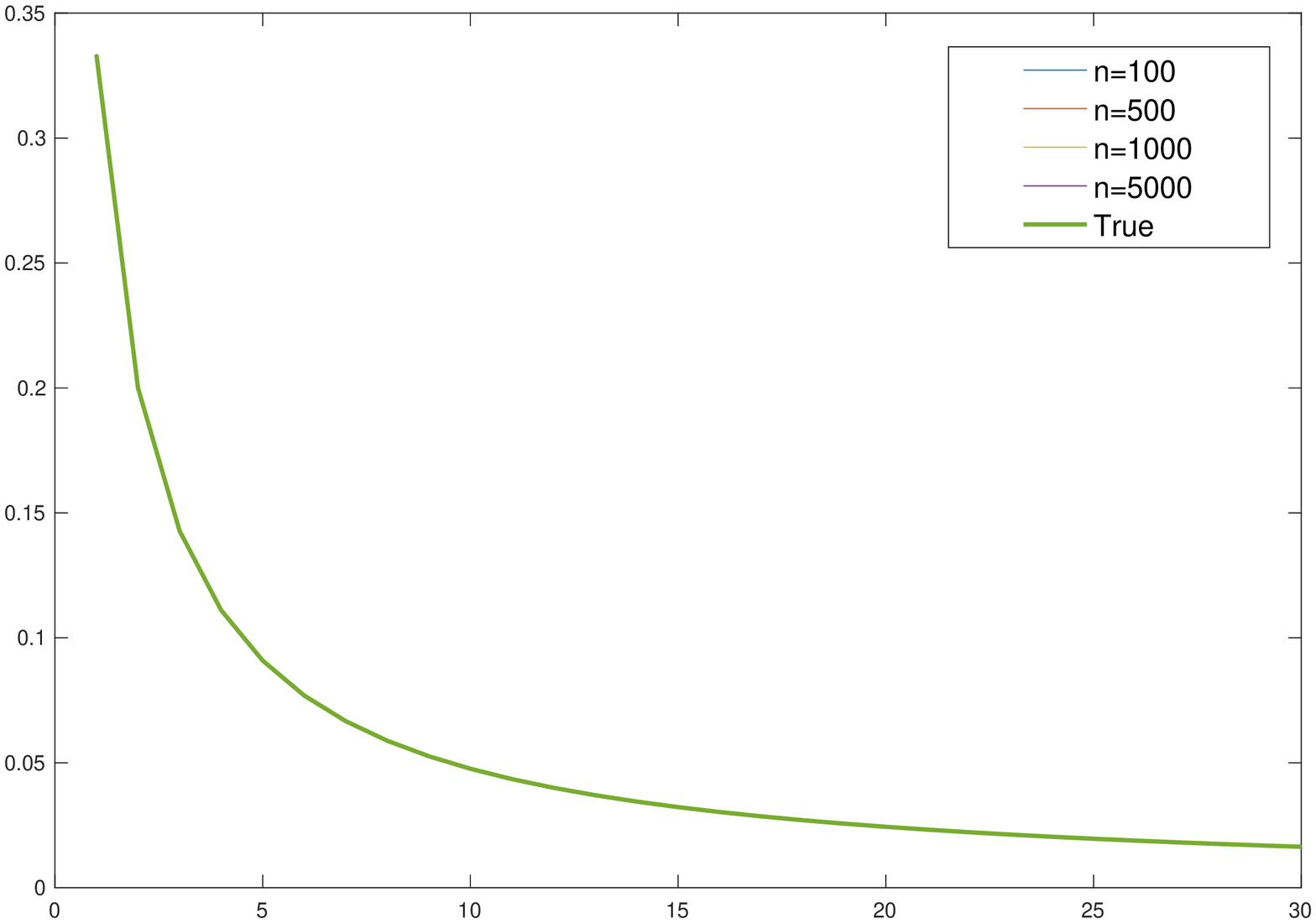}\label{fig6}}
\caption{Empirical best predictor of the mean as a function of $b$}
\end{figure}

\begin{figure}[h]
\centering
    \subfloat[Transformed coordinates]{\includegraphics[width=3.0in,height=3.0in]{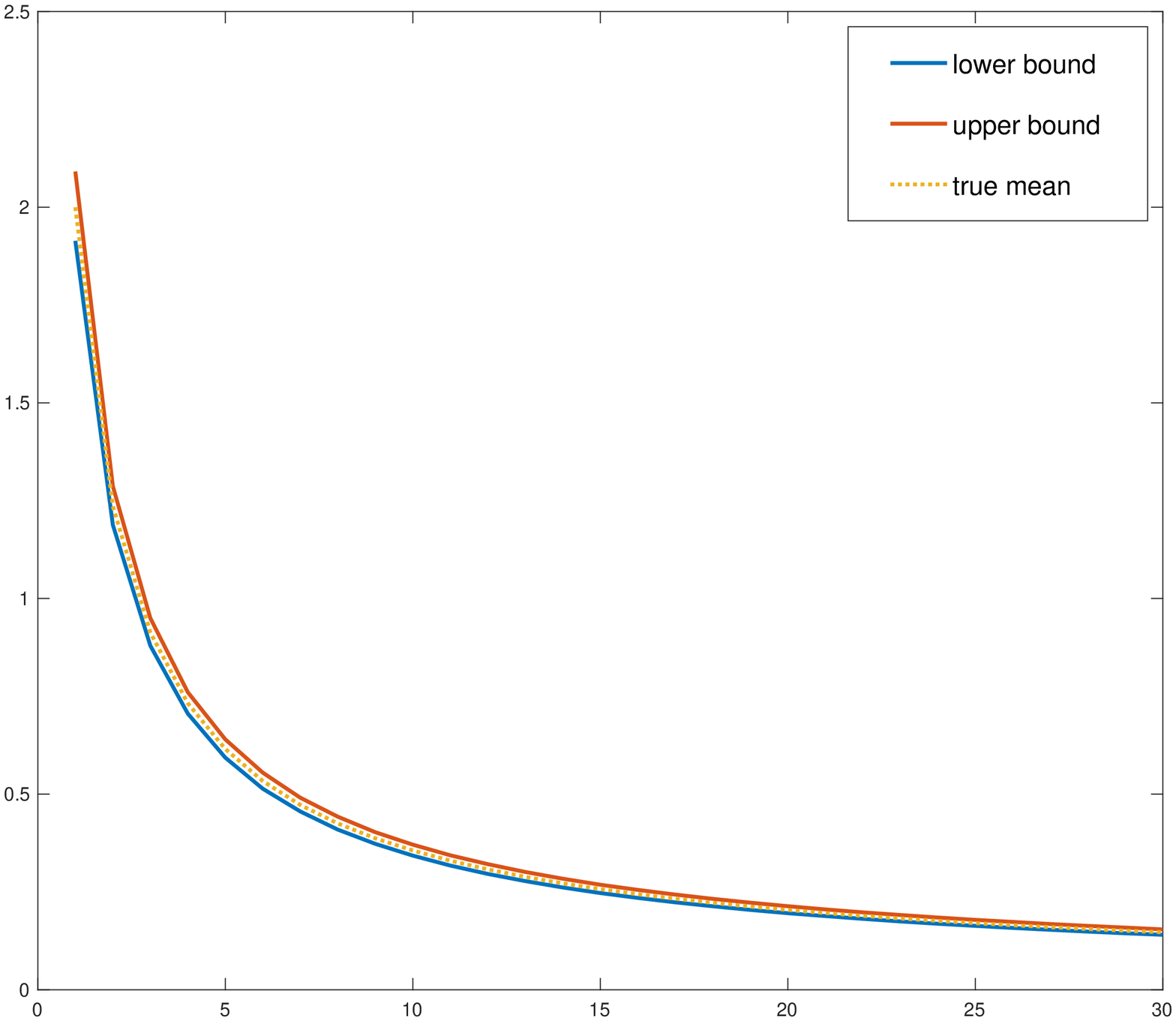}\label{fig9}} 
       \subfloat[Original coordinates]{\includegraphics[width=3.0in,height=3.0in]{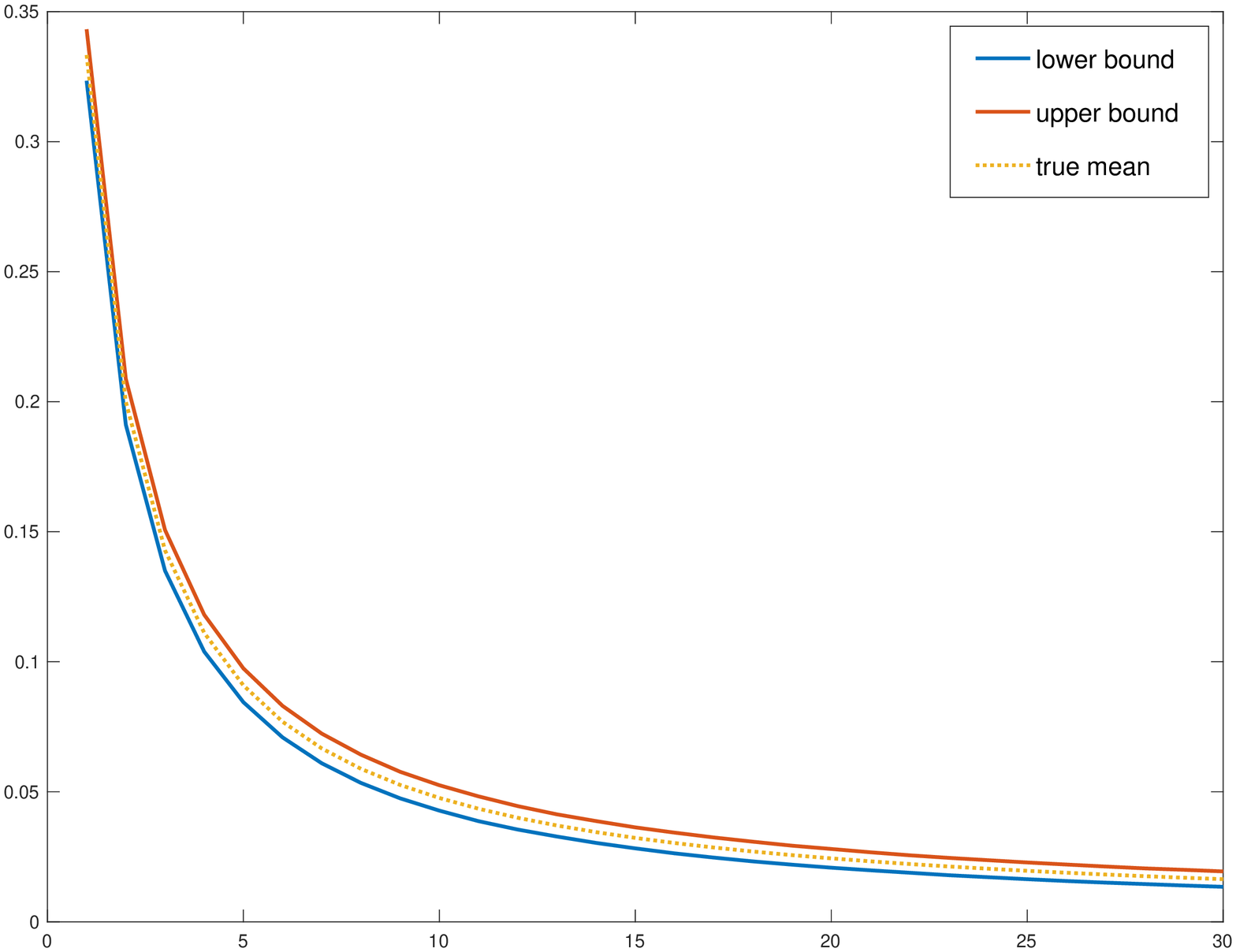}\label{fig10}}
\caption{Confidence intervals as function of $b$ for the Pareto variable}
\end{figure}

Finally, we illustrate construction of confidence intervals for the predictor of the mean, using 
 a sample of size n = 5000. For this, we compute the confidence intervals for the estimator of the predictor in the new coordinates in which the standard limit theorems can be applied, and then, transform back to the original coordinate system. The results are displayed in Figures \ref{fig9} and \ref{fig10}. The left and right panels display the plots of the confidence interval as a function of the parameter $b$ in the transformed and original coordinates, respectively.
 
The variance and the confidence intervals a decrease as function of $b,$ showing that there is no optimal parameter as in example \ref{ex2}. Nevertheless, we can use \eqref{tmv1} to choose a suitable $b$ that yields an acceptable variance and confidence interval, corresponding to the chosen sample size.

\subsection{Empirical examples using real-world data}
The application of the proposed approach to risk modeling and in economics was further illustrated with real-world data on historical hurricane economic loss and US household wealth.,  In both cases a parametric density with an infinite mean was estimated.  

For the hurricane economic loss application, we used normalized hurricane economic loss data(CL05) for the period 1900-2005   (Pielke et al.(2008)). There are in total 207 recorded hurricanes. As shown in Figure \ref{fludata} while most of the losses  (i.e., $180$ out of $207$) were under \$10 billion, in some cases the damage was as high as \$139.5 billion (as was the loss that resulted from the notorious Great Miami hurricane of 1926), indicating a heavy tail.

\begin{figure}[h] 
  \centering
  \includegraphics[width=3.0in,height=3.0in,keepaspectratio]{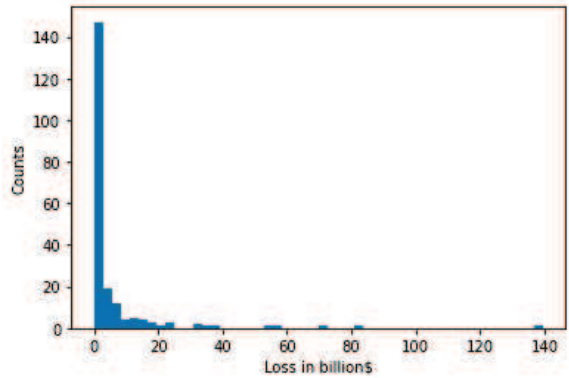}
  \caption{Hurricane losses}
  \label{fludata}
\end{figure}

To evaluate the central tendency of the losses, a shifted Pareto family with density $f_X (x) = \alpha(1 + x)^{-(1+\alpha)}$  was fitted. The MLE of $\alpha$ is $1.046,$ so the distribution has finite expectation but infinite variance. As above, we use the parametric family of transformations $u_b(x) = (1+x)^{-b}.$ The $95\%$ confidence intervals  for $\bar{X}_u$ in the transformed and original coordinates are shown in Figures \ref{fig13}-\ref{fig14}.
	
\begin{figure}[h]
		\centering
  \subfloat[Transformed coordinates]{\includegraphics[width=3.0in,height=3.0in]{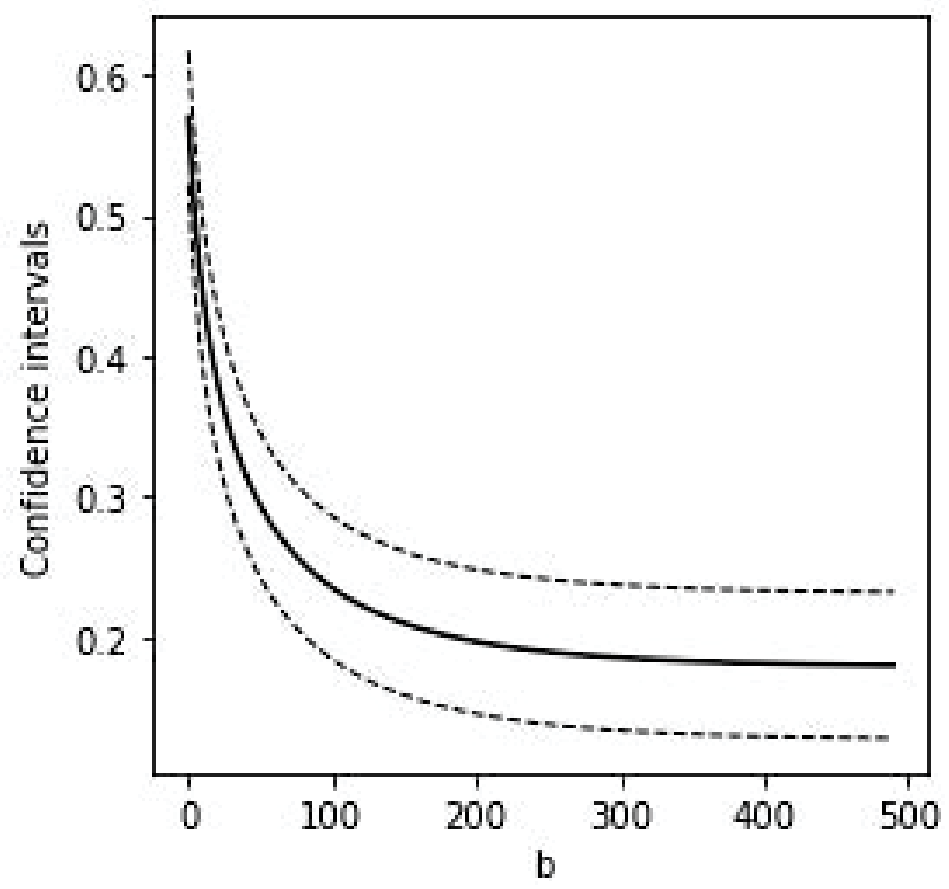}\label{fig13}} 
  \subfloat[Original coordinates]{\includegraphics[width=3.0in,height=3.0in]{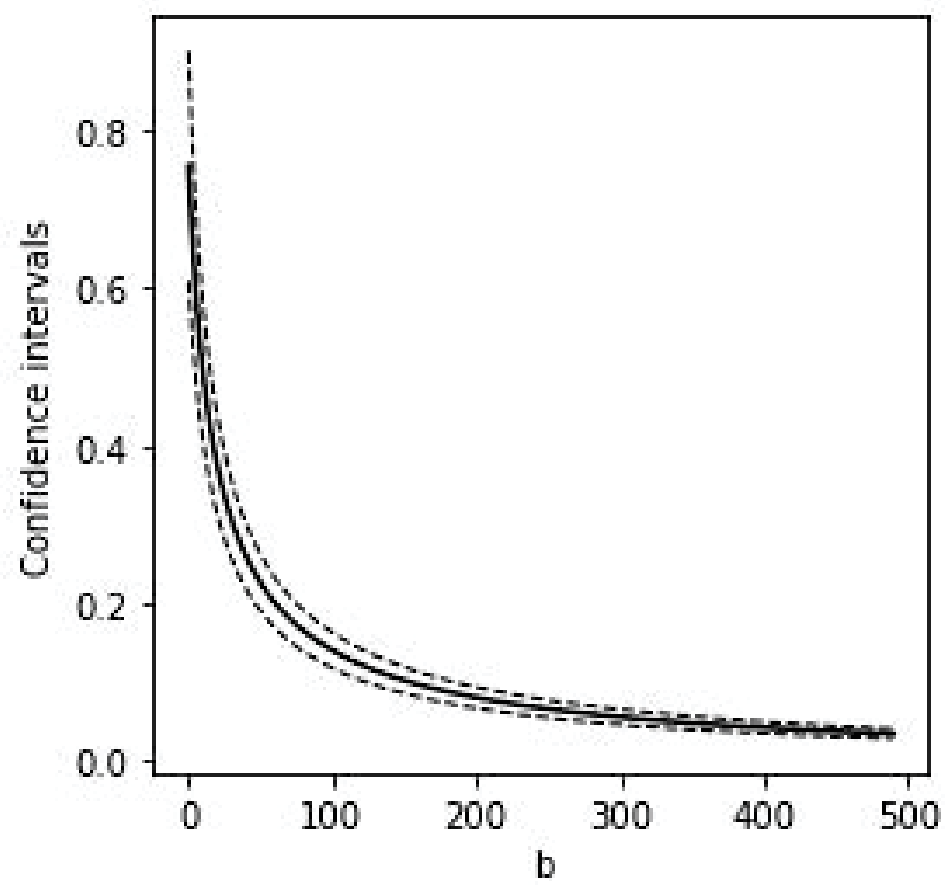}\label{fig14}}
\caption{Confidence intervals as function of $b$ for the Pareto variable}
\end{figure}

For the US household wealth example, we used data from the 2019 Survey of Consumer Finance(SCF) (\cite{SCF1}). SCF is normally a triennial cross-sectional survey of U.S. families, and includes information on families' balance sheets, pensions, income, and demographic characteristics. Additional information is also included from related surveys of pension providers and earlier such surveys conducted by the Federal Reserve Board. No other study for the country collects comparable information. The study was sponsored by the Federal Reserve Board in cooperation with the Department of the Treasury. ((\cite{SCF1}).
	
There were 5,777 respondents in the SCF2019 dataset, but each entry was imputed 5 times to keep the confidentiality of respondents’ identities. For the purpose of our study, we took the average of the 5 imputations to represent the original data. We are interested in the ’average’ household wealth. We truncated the data, so that the appropriate transformations can be applied. Figure \ref{HWD} is a histogram of $\log(wealth)$ which suggests a Student-t distribution. The estimated paramteres are, location: $12.74,$  scale:$2.81,$ and degrees of freedom: $14.$ If we denote the wealth by $X,$ then $X=e^Y,$ where $Y$ is the Student -t just described. Therefore, the moments of $X$ are all infinite. 
	
\begin{figure}[h]
\centering
      \includegraphics[width=2.5in,height=2.5in]{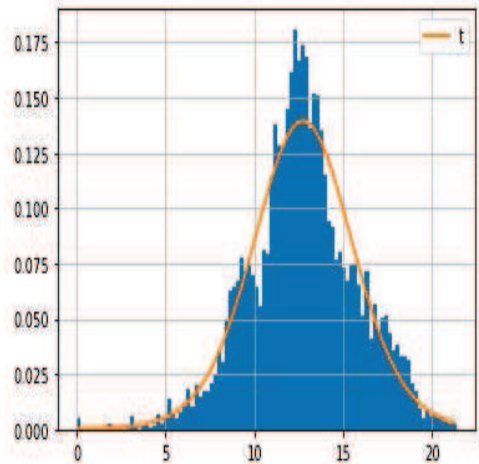}
\caption{Histogram of US household wealth in SCF2019}
\label{HWD}		
\end{figure}
	
To obtain an estimator using our proposal, we consider two coordinate transformations: A non-parametric transformation given by $u(x) = \ln x,$ and a parametric one given by $u_b(x) = x^b$ (applied to log wealth). These transformations are similar to a choice of utility function. We applied the proposed procedure and obtained the results displayed in Figures 7a- 7b. The left panel describes the confidence interval as a function of the parameter $b$ in the transformed scale, whereas the right panel describes the same in the original scale. The red and blue lines depict the upper and lower confidence intervals, respectively. The straight lines depict the confidence interval associated with $u(x) = \ln x,$  while the curved lines correspond to $u_b(x) = x^b.$  	
	
	\begin{figure}[h]
	\centering
	\subfloat[Transformed coordinates]{\includegraphics[width=2.5in,height=2.5in]{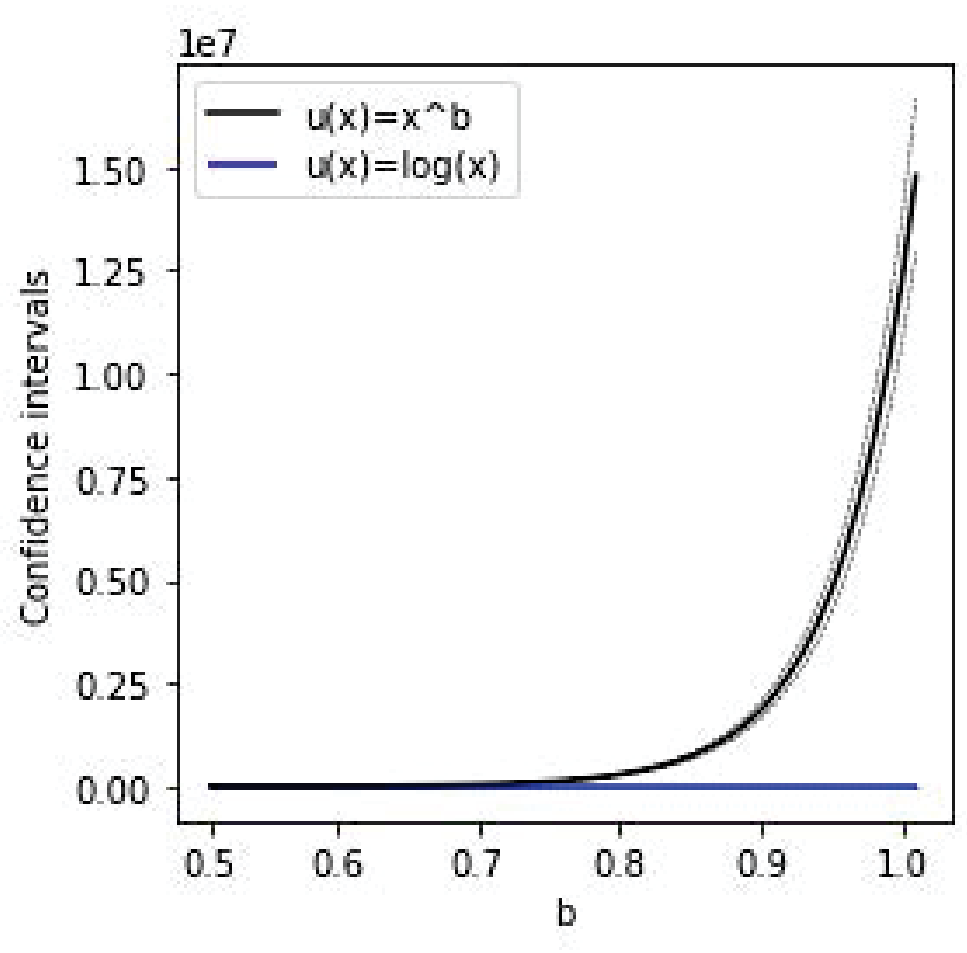}\label{fig15}} 
	\subfloat[Original coordinates]{\includegraphics[width=2.5in,height=2.5in]{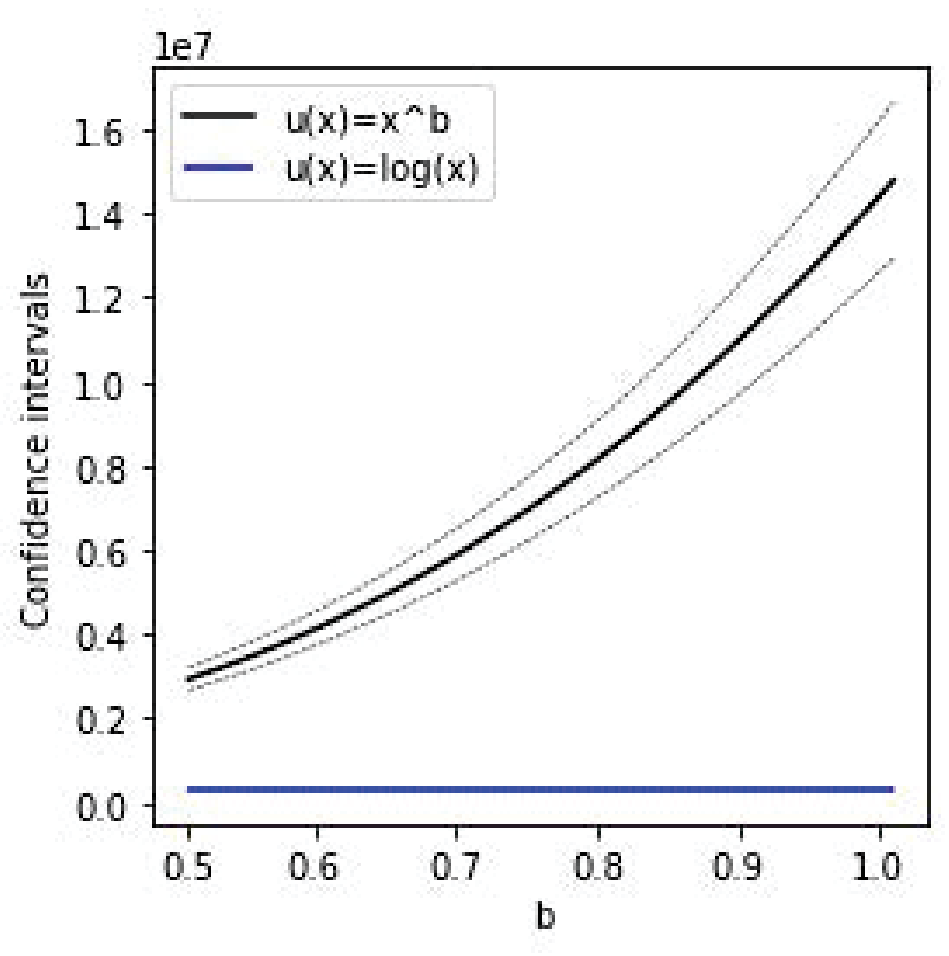}\label{fig16}}
	\caption{Confidence intervals as function of $b$ for the Pareto variable}
	\end{figure}
In this case we see that $u(x)=\ln x$ is a more convenient change of variables than $u_b(x)=x^b$ for any value of $b.$

\section{How to choose the change of variables $u(x)$}
Given a random variable $X$ with infinite mean or infinite variance, the first question to be addressed is whether there exists a function $u(X)$ such that $Y = U(X)$ has a finite mean and a finite variance. To see that it is indeed the case, consider any invertible mapping from R onto a bounded interval.  For example, $u(x)=(1/\pi)\arctan(x)$ or $u(x)=x/(1+|x|),$ map $\bbR$ onto $(-1,1)$ bijectively. Clearly, $E_u[X] = u^{-1}\big(E[u(X)]\big)$ is well defined.

In application, suppose $f_X(x,a)$ denotes the density of $X$ with $a$ denoting some parameter of the density. One then looks for a function $u_b(x)$ such that $u_b(x)f_X(x,a)$ is related to some parametric family of densities, or even better, to a member $f_X (x, g(a, b))$ of the same family for suitably chosen  $g(a,b).$ In this case, the analytical computation of interesting attributes of $u_b(X)$ is straightforward. As an illustration, consider first a positive random variable having a (semi)stable distribution of order $\alpha$ with $0 < \alpha < 1$ (see, e.g., Nolan (2020) or Person and G\'{o}rska (2010)). Denote by $f_\alpha(x)$ the probability density of such a variable. As a family of functions mapping $[0, \infty)$ onto $(0, 1],$ consider $u_b(x) = \exp(-bx)$ with $b > 0.$ It is known that for such a variable $E[\exp(-bX) ] = \exp(-b^{\alpha}) \Rightarrow E[\exp(-2bX) ] = \exp(-2^{\alpha}b^{\alpha}) .$

The best predictor in this case is $E_{u_b}[X]=b^{\alpha-1}$ and the prediction error in the $d_u-$distance is
$$Var(u_b(X)) = e^{-2^{\alpha}b^{\alpha}} - e^{-2b^{\alpha}}.$$
This has a maximum value at 
$$b_e = \bigg(\frac{\ln2-\ln2^{\alpha}}{2 - 2^{\alpha}}\bigg)^{1/\alpha}.$$
The resulting variance is 
$$Var(u_{b_e}) = \bigg(\frac{1}{2^{(1-\alpha)}}\bigg)^{2/(2-2^\alpha)} - \bigg(\frac{1}{2^{(1-\alpha)}}\bigg)^{2^{\alpha}/(2-2^\alpha)}.$$

An example related to the $Student-t$ distributions with parameter $1<\nu\leq2$  goes as follows. If we restrict the variable to $[0,\infty)$ its probability density is given by:
$$f_\nu(x) = \frac{1}{2}\frac{\Gamma(\frac{\nu+1}{2})}{\sqrt{\nu\pi}\Gamma(\frac{\nu}{2})}\bigg(1 + \frac{x^2}{\nu}\bigg)^{-(\nu+1)/2}.$$
For this class of variables the Laplace transform of the density is not available. But note that the change of variables given by $u_b(x)=(1+x^{2}/\nu)^{-b}$ and $b>0.$ It maps $[0,\infty)$ onto $(0,1].$ In this case the best predictor is given by:
$$E_{u_b}[X] = \bigg(\nu\big(1 - \frac{1}{E[u_b(X)]^{1/b}}\big)\bigg)^{1/2}.$$
Next, let $b,c,\nu,\mu$ be positive numbers. Then
$$\int_\bbR\frac{1}{\big(1 + \frac{x^2}{\nu}\big)^b}\frac{1}{\big(1 + \frac{x^2}{\nu}\big)^c}dx = \sqrt{\frac{\nu}{\mu}}\int_\bbR\frac{1}{\big(1 + \frac{x^2}{\mu}\big)^{b+c}}dx$$
where the left-hand side follows from a simple change of scale. Now, if $c=(\nu+1)/2$ is given, we can choose $b$ so that $b+c=(\mu+1)/2.$ As $c=(1+\nu)/2,$ choose $b>0$ so that $\mu>2.$ With all that, $u(b(X)$ has a finite mean and a finite variance. The means and the variances of $u_b(X)$ can then be expressed in terms of quotients of Gamma functions, but we do not pursue the matter further here.

One limitation of this approach is that it depends on knowledge of the distribution function of the random variable whose mean we need to estimate. As the examples in Section 4 suggest, when dealing with real data, we need some reasonable guess of the underlying distribution, because it might be hard to infer that the data comes from a distribution with infinite mean or variance.

An alternative strategy, suggested by the examples, is to consider a family $u_b(x)$ of changes of variables such that $E[u_b(X)]$ and $Var(u_b(X))$ are finite. Then choose a value of the parameter $b$ such that the prediction error measured by $Var(u_b(X)),$ or the confidence interval satisfies some pre-specified criteria.

In general, there is no universal prescription to choose $u(x).$ This is a similar situation regularly encountered by economists, risk managers, investors and other decision-makers when they have to propose a utility function, or other researchers when they chose variables (coordinate systems) to describe a certain phenomenon. In some domains of application, the choices may seem to be more straightforward than in others. For example, sound engineers find it more natural to use decibels to measure the power in a signal than to use Watts. But when it comes to an alternative description of random variables that have infinite means or variances, there does not appear to be a “natural” choice. The quest for a satisfactory change of variables may be a fertile field of research. Meanwhile, a practical approach is to be informed by observations as part of exploratory data analysis.

\section{Concluding remarks}

We proposed a method to transform a random variable with an infinite mean or variance in such a way that the transformed mean or variance becomes finite. The change of variables induces a distance on the class of random variables, in which the notions of best predictor and conditional best predictor can be thought of as a generalized arithmetic mean.  The latter happens to be analogous to the so-called certainty price used in utility theory.

As illustrated through several examples, the change of variables may depend on parameters that can be adjusted so that the prediction error attains an optimal value. We also examined the construction of estimators, and noted that the confidence intervals for the transformed variables can be transformed back to the original coordinates and become confidence intervals for the predictor.

As in insurance and economics, there are two issues to consider: First which transformation u to choose, and, second, how to interpret the best predictor Eu[X]. This again is similar to the problem that arises in insurance with the application of certainty price to risk. The certainty price of the risk depends on the utility function and a suitable utility function must be designed for each situation.  As such, there seems to be no universal prescription for selecting the optimal transformation.   In practice, the search for a convenient change of variables may be executed as part of the exploratory data analysis process.

It is noted that the method that we proposed may be viewed as a transformation exercise, except that the focus of our attention is estimation for the case when the random variables have an infinite mean or infinite variance. A potential use of this method is to compare predictors from possibly different samples. Put in risk management terms, it can be used to compare expected losses, especially in situations where the expected loss is anticipated to be large.

\textbf{Acknowledgments} We thank Joel Cohen for his suggestions and his comments on the manuscript. Victor de la  Pe\~{n}a thanks the CY Initiative of Excellence for the grant “Investissements d’Avenir” ANR-16-IDEX-0008, Project “EcoDep” PSI- AAP2020-0000000013, which funded his work.

\end{document}